
\documentclass{amsproc}
\usepackage{eurosym}
\usepackage{amssymb}
\usepackage{amsfonts}
\usepackage[utf8]{inputenc}
\usepackage[OT4]{fontenc}

\setcounter{MaxMatrixCols}{10}

\newcommand{\field}[1]{\mathbb{#1}}
\newcommand{\C}{\field{C}}
\newcommand{\K}{\field{K}}

\theoremstyle{plain}
\numberwithin{equation}{section}

\newtheorem{theorem}{Theorem}[section]

\newtheorem{lemma}[theorem]{Lemma}

\newtheorem{question}[theorem]{Question}
\newtheorem{conjecture}[theorem]{Conjecture}

\newtheorem{proposition}[theorem]{Proposition}
\newtheorem{corollary}[theorem]{Corollary}

\DeclareMathOperator{\graph}{graph}

\def\p{\mathbb{P}}

\begin{document}
\title[]{Quantitative properties of the non-properness set of a polynomial map, a positive characteristic case}

\author[Zbigniew Jelonek]{Zbigniew Jelonek} 
\author[Micha{\l} Laso\'n]{Micha{\l} Laso\'n}

\address[Z. Jelonek]{Institute of Mathematics of the Polish Academy of Sciences, ul.\'{S}niadeckich 8, 00-656 Warszawa, Poland}
\email{najelone@cyf-kr.edu.pl}
\address[M. Laso\'n]{Institute of Mathematics of the Polish Academy of Sciences, ul.\'{S}niadeckich 8, 00-656 Warszawa, Poland}
\email{michalason@gmail.com}

\keywords{affine variety, the set of non-proper points, parametric curve, $\K$-uniruled set, degree of $\K$-uniruledness, positive characteristic}
\subjclass{14R25, 14R99}
\thanks{Research supported by Polish National Science Centre grant no. 2013/09/B/ST1/04162.}

\begin{abstract}
Let $f:\K^n\rightarrow\K^m$ be a generically finite polynomial map of degree $d$ between affine spaces. In \cite{jela2} we proved that if $\K$ is the field of complex or real numbers, then the set $S_f$ of points at which $f$ is not proper, is covered by polynomial curves of degree at most $d-1$. In this paper we generalize this result to positive characteristic. We provide a geometric proof of an upper bound by $d$.
\end{abstract}

\maketitle

\section{Introduction}

We begin by recalling some necessary definitions, notions and facts (cf. \cite{jela2}). Unless stated otherwise, $\K$ is an arbitrary algebraically closed field. All affine varieties are considered to be embedded in an affine space. 

An irreducible affine curve $\Gamma\subset\K^m$ is called a \emph{parametric curve of degree at most $d$}, if there exists a non-constant polynomial map $f:\K\rightarrow\Gamma$ of degree at most $d$ (by degree of $f=(f_1,\dots,f_m)$ we mean $\max_i \deg f_i$). A curve is called \emph{parametric} if it is parametric of some degree. 

\begin{proposition}[Proposition 1.2 \cite{jela2}, cf. Proposition 2.4 \cite{jela}]\label{k-uniruledprop}
Let $X\subset\K^m$ be an irreducible affine variety of dimension $n$, and let $d$ be a constant. The following conditions  are equivalent:
\begin{enumerate}
\item for every point $x\in X$ there exists a parametric curve $l_x\subset X$ of degree at most $d$ passing through $x$,
\item there exists an open, non-empty subset $U\subset X$, such that for every point $x\in U$ there exists a parametric curve $l_x\subset X$ of degree at most $d$ passing through $x$,
\item there exists an affine variety $W$ of dimension $\dim X-1$, and a dominant polynomial map $\phi:\K\times W\ni (t,w)\to \phi(t,w)\in X$ such that $\deg_t \phi \leq d$.
\end{enumerate}
\end{proposition}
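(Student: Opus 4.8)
The plan is to establish the cycle $(1)\Rightarrow(2)\Rightarrow(3)\Rightarrow(1)$. The implication $(1)\Rightarrow(2)$ is immediate, taking $U=X$. All the content lies in the remaining two implications: building the family of curves for $(2)\Rightarrow(3)$, and, for $(3)\Rightarrow(1)$, first extracting a dense cover from the family and then upgrading it to a cover of \emph{every} point. I expect this last upgrade to be the crux.

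For $(2)\Rightarrow(3)$ I would realize all admissible curves as points of a single parameter space. Writing a degree-$\le d$ polynomial map $g\colon\K\to\K^m$ as $g_i(t)=\sum_{j=0}^{d}a_{ij}t^{j}$, its coefficients give a point of the affine space $\mathcal P=\K^{m(d+1)}$. If $X$ is cut out by polynomials $F_1,\dots,F_s$, then $g(\K)\subset X$ means $F_k(g(t))\equiv 0$ in $t$ for every $k$, a finite system of polynomial equations in the $a_{ij}$; hence the maps landing in $X$ form a closed subvariety $V\subset\mathcal P$, and evaluation $\mathrm{ev}\colon\K\times V\to X$, $(t,g)\mapsto g(t)$, is a morphism whose image, by $(2)$, contains the dense set $U$. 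Choosing an irreducible component $V_0\subset V$ with $\mathrm{ev}|_{\K\times V_0}$ still dominant gives a dominant morphism from a variety of dimension $\ge n$. It then remains to cut $V_0$ down to dimension $n-1$: replacing $V_0$ by a general hyperplane section of $\mathcal P$ repeatedly, I would check that dominance onto $X$ survives, since over the generic point $\eta\in X$ the fibre of $\mathrm{ev}$ maps finitely to $V_0$ (for generic $\eta$ no curve $g_w$ is constant equal to $\eta$) and so projects to a subvariety of dimension $\ge 1$, which a general hyperplane of $\mathcal P$ must meet. Stopping at dimension $n-1$, and invoking Bertini for irreducibility, yields $W$ and $\phi:=\mathrm{ev}|_{\K\times W}$ with $\deg_t\phi\le d$.

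For $(3)\Rightarrow(1)$ I would first note that for generic $w\in W$ the map $t\mapsto\phi(t,w)$ is non-constant, because $\phi$ is dominant while $\dim W=\dim X-1<\dim X$, so the curves $\phi(\K\times\{w\})$ cannot all be points. Over a dense open $W^\circ$ these give genuine parametric curves of degree $\le d$, and their union contains a dense open $U\subset X$; thus every point of $U$ lies on such a curve. To reach every point of $X$, I would show that
\[
Z_d:=\{x\in X:\ x\ \text{lies on a parametric curve of degree}\le d\ \text{contained in}\ X\}
\]
is closed in $X$; since $Z_d\supseteq U$ is dense and $X$ is irreducible, this forces $Z_d=X$. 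Closedness I would get from properness: homogenizing the parameter $t$, a degree-$\le d$ map extends to a morphism $\p^1\to\p^m$, and such maps are described, via their homogeneous coefficient tuples, by the \emph{complete} space $\mathbb G=\p^{(m+1)(d+1)-1}$ (true morphisms forming a dense open, but we take closures throughout). Imposing that the image lie in the projective closure $\overline X\subset\p^m$ cuts out a closed subvariety; the incidence set $\{(x,[\bar g]):x\in\bar g(\p^1)\}$ is closed, and because $\mathbb G$ is complete its projection to $\p^m$ is closed, so intersecting with $X$ shows $Z_d$ is closed.

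The main obstacle is that this properness argument must be reinforced by a control of degenerations. A limit of non-constant parametric curves of degree $\le d$ through points $x(s)\to x_0$ could a priori collapse to a constant map, have $x_0$ as its only affine point, or drift out of $X$, none of which produces a legitimate parametric curve through $x_0$. I would handle this by passing from individual parametrizations to $1$-cycles: the Chow variety of degree-$\le d$ cycles in $\overline X$ is proper and the degree is constant in the family, so the limit cycle still has a one-dimensional component $\Gamma'$ through $x_0$, after which one verifies that $\Gamma'$ is again a parametric curve of degree $\le d$ inside $X$ (rationality and the single-point-at-infinity condition persisting in the limit). In positive characteristic one must moreover ensure that inseparability does not inflate the parametrization degree; but the completeness and dimension estimates above are characteristic-free, which is precisely why the equivalence holds over an arbitrary algebraically closed field.
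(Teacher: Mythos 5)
Your overall architecture $(1)\Rightarrow(2)\Rightarrow(3)\Rightarrow(1)$ is the natural one, and you correctly identify where the real difficulty sits; but precisely at that point your argument has a genuine gap. In $(3)\Rightarrow(1)$ you reduce everything to the claim that a Chow-limit of parametric curves of degree at most $d$ through points $x(s)\to x_0$ has a component $\Gamma'$ through $x_0$ which is again a parametric curve of degree at most $d$, and you propose to verify this via ``rationality and the single-point-at-infinity condition persisting in the limit.'' That criterion is insufficient: a rational curve of degree at most $d$ whose projective closure meets the hyperplane at infinity in a single \emph{point} need not admit any polynomial parametrization --- it needs a single \emph{place} (branch) at infinity. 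For example, the cubic $\Gamma=\{xy=x^3+1\}\subset\K^2$, projectively $\{xyz=x^3+z^3\}$, is rational and meets $\{z=0\}$ only at $[0:1:0]$, but it has a node there with two branches; its coordinate ring is $\K[x,x^{-1}]$, so it carries a Laurent but no polynomial parametrization. So the content of the ``one verifies'' step is exactly to rule out that the number of places at infinity jumps in the limit, and since singularities do in general get worse under specialization, this cannot be waved through by properness of the Chow variety; it is the crux, and it is left unproved. The known way to close it (and the mechanism behind Lemma \ref{tool} of this paper, which reflects how the cited reference argues) is to take limits of \emph{parametrizations} rather than of cycles: write a non-constant map of degree at most $d$ as $\varphi_{a,b}(t)=\bigl(a_i+\sum_{j}b_{i,j}^{\,j}t^j\bigr)_i$ with $a\in\K^m$ equal to the value at $t=0$ and $b\in\p^{dm-1}$. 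Rescaling $b\mapsto\lambda b$ only reparametrizes $t\mapsto\lambda t$, so this is well defined on $\p^{dm-1}$, and every point of $\p^{dm-1}$ gives a genuinely non-constant map of degree at most $d$ (some $b_{i,j}\neq 0$ forces the coefficient $b_{i,j}^{\,j}\neq 0$). The incidence set $\{(a,b):\varphi_{a,b}(\K)\subset X\}$ is closed and has complete second factor, so its image under $(a,b)\mapsto a=\varphi_{a,b}(0)$ is closed; by $(3)$ (after translating the parameter so the marked point is at $t=0$, and extracting $j$-th roots of coefficients, available over $\K=\overline{\K}$) this image is dense, hence all of $X$. This yields $(3)\Rightarrow(1)$ with no degeneration analysis at all.

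There is also a smaller error in $(2)\Rightarrow(3)$: your justification ``for generic $\eta$ no curve $g_w$ is constant equal to $\eta$'' is false in general. The constant maps form a copy of $X$ inside $V$, and whenever $x=g(t_0)$ for a non-constant $g\in V$, the reparametrizations $t\mapsto g(t_0+\lambda t)$ converge to the constant map at $x$ as $\lambda\to 0$; so under $(2)$ the constants lie in the closure of the non-constant locus and typically lie inside your dominant component $V_0$ (already for $X=\K^2$, $d=1$, where $V_0=V=\K^4$ contains all constants). Consequently the fibre of $\mathrm{ev}$ over a generic $\eta$ contains the line $\K\times\{g_\eta\}$ and does \emph{not} map finitely to $V_0$. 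The implication is repairable --- that line collapses to a single point of $V_0$, while every fibre component meeting the non-constant locus projects with finite fibres and image of dimension at least $\dim V_0+1-n$, which is what the hyperplane-cutting needs --- but as written the stated justification is wrong, and the selection of $V_0$ (excluding the possibility that it consists entirely of constant maps) should be made explicit.
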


We say that an affine variety $X$ has \emph{degree of $\K$-uniruledness at most $d$} if all its irreducible components satisfy the conditions of Proposition \ref{k-uniruledprop} (cf. Definition 1.3 \cite{jela2}). An affine variety is called \emph{$\K$-uniruled} if it has some degree of $\K$-uniruledness. To simplify the notion we assume that the empty set has degree of $\K$-uniruledness equal to zero, in particular it is $\K$-uniruled.

Let $f:X\rightarrow Y$ be a generically finite polynomial map between affine varieties. Recall that $f$ is \emph{finite (proper)} at a point $y\in Y$, if there exists an open neighborhood $U$ of $y$ such that $f\vert_{f^{-1}(U)}:f^{-1}(U)\rightarrow U$ is a finite map. The set of points at which $f$ is not finite (proper) we denote by $S_f$ (see \cite{jel}, and also \cite{jel1,jel2,stas}). The set $S_f$ is a good measure of non-properness of the map $f$, moreover it has interesting applications \cite{jel3,jel4,saf,ha}.

\begin{theorem}[Theorem 4.1 \cite{jel5}]\label{hiper}
Let $f:X\rightarrow Y$ be a generically finite polynomial map between affine varieties. The set $S_f$ is a hypersurface in $\overline{f(X)}$ or it is empty. Additionally, if $X$ is $\K$-uniruled, then the set $S_f$ is also $\K$-uniruled.
\end{theorem}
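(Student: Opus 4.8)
The plan is to prove both assertions of Theorem~\ref{hiper} by reducing to a situation where $S_f$ can be analyzed directly via a parametrization of fibers going to infinity. Let me lay out the geometry first. A point $y\in\overline{f(X)}$ fails to be proper precisely when, roughly speaking, there is a sequence of points in $X$ escaping to infinity whose images converge to $y$; algebraically this means $y$ lies in the image of the ``boundary at infinity'' of $X$ under a suitable extension of $f$. So the natural first step is to compactify. I would embed $X\subset\K^n\subset\p^n$ (taking $X$ to be an irreducible component, which suffices since $S_f$ for the whole map is the union of the $S_{f|_{X_i}}$) and consider the closure $\overline{X}\subset\p^n$, together with the hyperplane at infinity $H_\infty$. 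Similarly I would partially compactify the target, say $Y\subset\K^m\subset\p^m$, and resolve the induced rational map $\overline{f}:\overline{X}\dashrightarrow\p^m$ on a variety $\tilde{X}$ with a proper morphism $\pi:\tilde X\to\overline X$ that is an isomorphism over $X$, so that $\tilde f:\tilde X\to\p^m$ is a genuine morphism. The ``set at infinity'' is $\mathcal{B}=\pi^{-1}(\overline X\setminus X)$, a closed subset of $\tilde X$.

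The key claim I would establish is that $S_f=\overline{\tilde f(\mathcal B)}\cap Y$, up to the finite fibers; more precisely, the non-properness set equals the set of points of $Y$ that receive, arbitrarily close, points from the boundary $\mathcal B$. This is the standard characterization of non-properness: $f$ is proper at $y$ iff $y$ has a neighborhood disjoint from $\tilde f(\mathcal B)$. Granting this, the first assertion — that $S_f$ is a hypersurface in $\overline{f(X)}$ or empty — would follow from a dimension count. Since $f$ is generically finite, $\dim\overline{f(X)}=\dim X=n$. The boundary $\mathcal B$ has dimension $n-1$ (it is the intersection of the $n$-dimensional $\tilde X$ with the exceptional/infinity locus), so $\overline{\tilde f(\mathcal B)}$ has dimension at most $n-1$. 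The content is to show the image has dimension \emph{exactly} $n-1$ whenever $S_f\neq\emptyset$, i.e.\ that $S_f$ is pure of codimension one. Here I would invoke the valuative/curve-selection viewpoint: through any non-proper point one can trace a formal arc or a curve on $X$ tending to infinity and mapping to that point, and a fiber-dimension argument (every fiber of $\tilde f|_{\mathcal B}$ over a point of $S_f$ is positive-dimensional, forcing $S_f$ itself to drop by at most one from $\mathcal B$) pins the dimension down. In positive characteristic one must be careful to use the scheme-theoretic / valuative definition of properness rather than any topological sequence argument, but the algebraic characterization transfers verbatim.

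For the second assertion — if $X$ is $\K$-uniruled then so is $S_f$ — the plan is to transport parametric curves from $X$ to $S_f$ through the boundary. Here is where I expect the main obstacle to lie. The idea is that a point $y\in S_f$ is approached by points of $X$ running to infinity; using the $\K$-uniruled structure of $X$ one has, by Proposition~\ref{k-uniruledprop}(3), a dominant map $\psi:\K\times W\to X$ with $W$ of dimension $n-1$. I would like to produce, for a generic point $w\in W$, a parametric curve inside $S_f$ passing through the limit point of $\psi(t,w)$ as $t$ (or the curve) tends to infinity, thereby covering a dense open subset of $S_f$ and applying Proposition~\ref{k-uniruledprop}(2)$\Rightarrow$(1). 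Concretely, one composes the parametrizing curves of $X$ with the rational extension of $f$ and takes boundary values; the subtlety is that these limit maps could be constant (giving points, not curves) or could fail to be non-constant polynomial maps $\K\to S_f$ of controlled degree.

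The step I expect to be hardest, and the crux of the whole theorem, is precisely this last transfer: guaranteeing that the induced maps on the boundary are \emph{non-constant} and land in $S_f$ for a dense family of parameters, so that the covering-by-curves property genuinely descends. The danger is degeneration — the family of curves on $X$ might all escape to infinity ``in the same direction,'' collapsing to a lower-dimensional image on the boundary, in which case they would not cover $S_f$. Overcoming this requires a genericity argument showing that the $(n-1)$-dimensional family $W$ of escaping curves sweeps out an $(n-1)$-dimensional, hence dense, subset of the hypersurface $S_f$, combined with a base-change/normalization to ensure the restricted maps remain parametric. Making this work uniformly in positive characteristic, where inseparability can cause fiber dimensions and images to behave unexpectedly, is the delicate point; I would handle it by passing to the normalization of the relevant boundary component and using generic smoothness-type statements only in the separable directions, falling back on the valuative criterion where generic smoothness is unavailable.
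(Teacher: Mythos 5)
The paper itself does not prove Theorem \ref{hiper}; it is quoted from \cite{jel5}, and the only ingredient of your plan that appears here is your ``key claim'', which is exactly Proposition \ref{graph}. That part of your setup is correct. The two substantive steps, however, each contain a genuine gap. For the hypersurface assertion, your dimension count only gives $\dim S_f\leq n-1$ (indeed $\mathcal{B}$ is pure of dimension $n-1$ by Krull's theorem, and the proper map $\tilde f$ cannot raise dimension). The real content is purity: every component of $S_f$ has dimension exactly $n-1$. Your mechanism for this is false and, moreover, backwards: if every fiber of $\tilde f|_{\mathcal B}$ over $S_f$ were positive-dimensional, the fiber-dimension theorem would force $\dim S_f\leq\dim\mathcal B-1=n-2$, the opposite of what you want; and the premise fails anyway --- for $f(x,y)=(x,xy)$ one checks that $\mathcal B$ is the curve $\{(0:0:1)\}\times(\{0\}\times\K)$ and $\tilde f|_{\mathcal B}$ is injective onto $S_f=\{0\}\times\K$. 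What is actually needed is generic finiteness of $\tilde f$ on the relevant components of $\mathcal B$, or a different argument entirely; the known proofs are algebraic (Zariski's Main Theorem together with the fact that the complement of an affine open subset of a normal variety is pure of codimension one, or Jelonek's description of $S_f$ as the union of the zero sets of the leading coefficients of the minimal polynomials of the coordinate functions of $X$ over $\K[Y]$). Nothing in your sketch supplies this; curve selection only characterizes membership in $S_f$, it does not pin down dimension.

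For the uniruledness assertion the problem is more fundamental than the degeneration you flag. Over any algebraically closed field, a non-constant polynomial curve $\K\to\K^m$ has no limit in $\K^m$ at infinity: its extension $\p^1\to\p^m$ sends $\infty$ into the hyperplane at infinity. Hence, for a curve $l_w=\psi(\K\times\{w\})$ of the uniruling of $X$, the ``boundary value'' you propose to take lies in the affine target (where $S_f$ lives) if and only if $f\circ\psi(\cdot,w)$ is \emph{constant}, i.e.\ $l_w$ lies inside a fiber of $f$. Since $f$ is generically finite, this happens only for a degenerate subfamily of curves, and those curves contribute single points of $S_f$, lying over the image of the positive-dimensional-fiber locus, a set of dimension at most $n-2$. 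So your construction covers essentially nothing of $S_f$, and no genericity, normalization, or valuative-criterion refinement can repair it: the curves covering $S_f$ must be manufactured \emph{inside} the boundary $\mathcal B$ itself and then projected, not obtained as limits of curves of $X$. This is precisely the mechanism of this paper in the case $X=\K^n$: the curves filling the slices $A_c$, $c\neq 0$, of the graph closure are specialized to the slice $A_0\subset\mathcal B$ using completeness of the parameter space of curves of bounded degree (Lemma \ref{tool}), and only then pushed down by $\pi$ (proof of Theorem \ref{kn}). Some such specialization/completeness argument, or the algebraic machinery of \cite{jel5}, is what your proposal is missing.
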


\begin{theorem}[Theorem 4.6 \cite{jel0}]\label{hiperc}
Let $f:X\to\K^n$ be a dominant, generically finite, separable, polynomial map between affine varieties. Then the set $S_f$ is a hypersurface of  degree at most
$$\frac{\deg X\prod_i \deg  f_i-\mu(f)}{\emph{min}_i\deg f_i},$$
where $\mu(f)$ is the multiplicity of $f$, or it is empty.
\end{theorem}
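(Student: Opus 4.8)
The plan is to realize $\deg S_f$ as the number of points cut out on $S_f$ by a generic line, and to convert this into a Bézout count of the points at infinity of the preimage of that line. Throughout write $d_i=\deg f_i$ and fix an index $i_0$ with $d_{i_0}=\min_i d_i$. Since $f$ is dominant, Theorem \ref{hiper} shows that $S_f$ is a hypersurface in $\K^n$, so $\deg S_f=|S_f\cap L|$ for a generic affine line $L$. I would take $L$ of generic direction $v=(v_1,\dots,v_n)$ with all $v_i\neq0$, which on the one hand makes $L$ meet $S_f$ transversally in exactly $\deg S_f$ affine points, and on the other hand lets me cut out $L$ by the $n-1$ hyperplanes $\ell_j(y):=y_j-(v_j/v_{i_0})y_{i_0}=c_j$, $j\neq i_0$. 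Pulling these back gives $g_j:=f_j-(v_j/v_{i_0})f_{i_0}$ with $\deg g_j\le\max(d_{i_0},d_j)=d_j$, so the curve $C:=f^{-1}(L)=\{g_j=c_j:j\neq i_0\}$ has projective closure $\overline C$ contained in $\overline X\cap\bigcap_{j\neq i_0}V_j$, where $V_j$ is the degree-$d_j$ hypersurface cut by the homogenization of $g_j$, and $x_0$ is the coordinate at infinity. For generic $c_j$ this intersection is proper, so Bézout yields $\deg\overline C\le\deg X\cdot\prod_{j\neq i_0}d_j$.

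Next I would identify $S_f\cap L$ with points at infinity of $C$. By Jelonek's description of the non-properness set and its compatibility with generic linear sections (cf. \cite{jel,jel5}), a point $c\in L$ lies in $S_f$ exactly when some branch of $C$ runs to infinity over $c$; that is, $S_f\cap L$ is the image under $f$ of the set $P$ of points $p\in\overline C$ at infinity for which $f(p)\in\K^n$. On $\overline C$ each $g_j$ is identically the constant $c_j$, and since $\{g_j\}_{j\neq i_0}$ together with $f_{i_0}$ form a linear coordinate system on the target, the condition $f(p)\in\K^n$ holds precisely when $f_{i_0}$ is finite at $p$, i.e. when the top-degree part $F_{i_0}$ of $f_{i_0}$ vanishes at $p$. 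Thus $\deg S_f=|S_f\cap L|\le|P|$, where $P=\{p\in\overline C:\ x_0(p)=0,\ F_{i_0}(p)=0\}$.

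The decisive step is a Bézout count of $\overline C\cap V_{i_0}$, where $V_{i_0}=\{F_{i_0}-c_{i_0}x_0^{d_{i_0}}=0\}$ has degree $d_{i_0}=\min_i d_i$. On the one hand $\overline C\cdot V_{i_0}=d_{i_0}\cdot\deg\overline C$. On the other hand the affine points of this intersection are exactly the fibre $f^{-1}(c)$ over the point $c\in L$ with $f_{i_0}=c_{i_0}$; for generic $c$ separability forces this fibre to be reduced with $\mu(f)$ points, each of intersection multiplicity one. The remaining intersection points are the points of $P$, and here lies the crux: along any branch of $\overline C$ through such a $p$, writing $a=\mathrm{ord}_p(x_0)\ge1$, the local equation factors as $F_{i_0}-c_{i_0}x_0^{d_{i_0}}=x_0^{d_{i_0}}(f_{i_0}-c_{i_0})$, whose order is at least $a\,d_{i_0}\ge d_{i_0}$. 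Hence each point of $P$ contributes at least $d_{i_0}$, so $d_{i_0}\deg\overline C\ge\mu(f)+d_{i_0}|P|$ and $|P|\le\deg\overline C-\mu(f)/d_{i_0}$. Combining with the Bézout bound for $\deg\overline C$ gives $\deg S_f\le|P|\le\deg X\prod_{j\neq i_0}d_j-\mu(f)/d_{i_0}=(\deg X\prod_i d_i-\mu(f))/\min_i d_i$, as claimed.

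The conceptual heart of the argument is this local multiplicity bound at infinity, which is exactly what produces the denominator $\min_i d_i$: slicing with the cheapest coordinate $f_{i_0}$ forces every escaping branch to cost at least $d_{i_0}$ in the Bézout balance. The point I expect to require the most care is arranging a single generic line $L$ that simultaneously (i) meets $S_f$ in precisely $\deg S_f$ reduced points, (ii) has its preimage captured by the slicing so that $S_f\cap L$ really is the image of $P$, and (iii) lies over a generic $c$ with reduced fibre of cardinality $\mu(f)$. Verifying (ii) amounts to the compatibility of the non-properness set with generic linear sections, while (iii) is precisely where the separability hypothesis—and hence the positive-characteristic subtlety—enters, since without it the generic fibre could be non-reduced and the affine count would fall below $\mu(f)$.
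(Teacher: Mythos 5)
The paper itself contains no proof of this statement: Theorem \ref{hiperc} is quoted from Theorem 4.6 of \cite{jel0}, so there is no in-paper argument to compare yours against, and I can only assess your proposal on its merits. Its skeleton is sound and completeable: slicing by a generic line $L$, bounding $\deg\overline{C}$ for $C=f^{-1}(L)$ by B\'ezout, and then balancing the intersection of $\overline{C}$ with the degree-$d_{i_0}$ hypersurface $V_{i_0}$ against the $\mu(f)$ affine fibre points (this is indeed where separability enters) and the escaping branches does produce the stated bound, with the cheapest coordinate $f_{i_0}$ responsible for the denominator $\min_i\deg f_i$.

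Two steps need repair or genuine proof. (1) Your asserted equivalence ``$f_{i_0}$ is finite at $p$, i.e.\ $F_{i_0}(p)=0$'' is only an implication: $F_{i_0}(p)=0$ is compatible with $f_{i_0}$ having a pole along every branch of $\overline{C}$ at $p$, and at such a point the order of the pullback of $x_0^{d_{i_0}}(f_{i_0}-c_{i_0})$ along a branch is $a\,d_{i_0}+\mathrm{ord}(f_{i_0}-c_{i_0})$ with the second term negative, so your claimed lower bound $d_{i_0}$ fails there. The repair is to define $P$ as the set of points at infinity of $\overline{C}$ admitting a branch along which $f$ has a finite limit --- this is the set that maps onto $S_f\cap L$ --- for which the bound $\ge d_{i_0}$ is valid, and to note that all remaining points of $\overline{C}\cap V_{i_0}$ contribute non-negatively; then $d_{i_0}\deg\overline{C}\ge\mu(f)+d_{i_0}|P|$ survives. (2) The ``compatibility of $S_f$ with generic linear sections'' is the real content of the argument, not a citation-level formality, and in the present setting it should be derived from Proposition \ref{graph}: the set $E=\overline{\graph(f)}\setminus\graph(f)$ is pure of dimension $n-1$; the points of $S_f=\pi(E)$ over which $\pi|_E$ has a positive-dimensional fibre form a set of dimension at most $n-2$, which a generic $L$ avoids; hence no irreducible component of $\overline{\graph(f)}\cap\pi^{-1}(L)$ (each of dimension $\ge 1$ by Krull) can lie inside $E$, so every point of $E$ over $S_f\cap L$ lies in the closure of $\graph(f|_C)$ and therefore yields a branch of $\overline{C}$ at infinity along which $f$ tends to the given point of $S_f\cap L$. (A smaller point: for $\deg\overline{C}\le\deg X\prod_{j\ne i_0}d_j$ you should invoke the B\'ezout inequality for possibly improper intersections, since the part of $\overline{X}\cap\bigcap_{j\neq i_0}V_j$ at infinity does not depend on the constants $c_j$ and need not be finite.) With these repairs your argument is correct; whether it coincides with Jelonek's original proof in \cite{jel0} cannot be determined from this paper.
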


Due to Theorem \ref{hiper}, if $X$ is a $\K$-uniruled affine variety and $f:X\rightarrow \K^m$ is a generically finite polynomial map, then the set $S_f$ is $\K$-uniruled. In \cite{jela2} we studied the behavior of degree of $\K$-uniruledness of the set $S_f$ as a function of degree of the map $f$.

\begin{theorem}[Theorem 3.2 \cite{jela2}]\label{heorem1}
Let $f:\C^n\rightarrow\C^m$ be a generically finite polynomial map of degree $d$. Then the set $S_f$ has degree of $\C$-uniruledness at most $d-1$.
\end{theorem}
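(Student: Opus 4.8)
The plan is to realize $S_f$ geometrically as the image of the ``fibre at infinity'' of a rational extension of $f$, and then to cover that image by rational curves whose degree I can control. Throughout I may assume $S_f\neq\emptyset$, since the empty set has degree of $\C$-uniruledness $0\le d-1$; and by Theorem \ref{hiper} I already know that $S_f$ is a hypersurface in $Y:=\overline{f(\C^n)}$ and that it is $\C$-uniruled, so the only issue is the \emph{degree} of uniruledness.

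First I would compactify the source. Writing $f=(f_1,\dots,f_m)$ with $\deg f_i\le d$, I homogenise each component to degree $d$, setting $\widetilde f_i(x_0,x)=x_0^{\,d}f_i(x/x_0)$, and I define a rational map
\begin{equation*}
F:\p^n\dashrightarrow\p^m,\qquad [x_0:x]\mapsto[x_0^{\,d}:\widetilde f_1(x_0,x):\cdots:\widetilde f_m(x_0,x)],
\end{equation*}
which restricts to $f$ on the chart $\{x_0=1\}=\C^n$. The indeterminacy locus of $F$ lies on the hyperplane at infinity $H_\infty=\{x_0=0\}\cong\p^{n-1}$ and equals the base locus $B=\{[0:x]:f_1^{(d)}(x)=\cdots=f_m^{(d)}(x)=0\}$, where $f_i^{(d)}$ denotes the leading form of $f_i$. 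The point of this picture is that a generic point of $H_\infty$ is sent by $F$ to infinity in the target, so that the \emph{finite} points appearing in the image of (a resolution of) $F$ along $H_\infty$ are exactly those produced over $B$. I would make precise the expected identity: after resolving $F$ by $\sigma:Z\to\p^n$ to a morphism $\widetilde F=F\circ\sigma$, one has $S_f=\widetilde F(\sigma^{-1}(B))\cap\C^m$. The inclusion ``$\supseteq$'' is a direct computation with arcs; the inclusion ``$\subseteq$'' is the curve-selection statement that every non-proper point is the limit $\lim f(\varphi(t))$ along some arc $\varphi(t)\to\infty$, whose projectivisation necessarily approaches $B$.

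Next I would produce the covering curves. Since $\sigma^{-1}(B)$ is an exceptional locus over $B\subset\p^{n-1}$, it is covered by rational curves, and hence so is its $\widetilde F$-image; concretely I would take, through a generic point of $S_f$, the $\widetilde F$-image of the proper transform of a suitable line or arc meeting $B$. The degree count is where the bound $d-1$ comes from. The morphism $\widetilde F$ is given by the linear system $\Lambda=|\sigma^*\mathcal O_{\p^n}(d)-R|$, where $R\ge 0$ is the base divisor supported over $B$; for a curve $C$ in the relevant covering family the degree of $\widetilde F(C)\subset\p^m$ equals $(\sigma^*\mathcal O_{\p^n}(d)-R)\cdot C$ divided by the degree of $\widetilde F|_C$. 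The crucial point is that for a curve $C$ whose $\widetilde F$-image meets the affine part $\C^m$ --- i.e. which corresponds to an arc with a \emph{finite} limit --- the leading form must vanish along the approach to $B$, which forces $R\cdot C\ge 1$ and hence $(\sigma^*\mathcal O_{\p^n}(d)-R)\cdot C\le d-1$. Equivalently, and more elementarily, along an arc $x(s)=sv+a+s^{-1}b+\cdots$ with $[0:v]\in B$ the coefficient of $s^{d}$ in $f_i(x(s))$ is $f_i^{(d)}(v)=0$, so the portion of the parametrisation that survives in the finite limit has effective degree at most $d-1$; translating a one-parameter family of such arcs through a generic point of $S_f$ yields a parametric curve of degree at most $d-1$ inside $S_f$. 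By Proposition \ref{k-uniruledprop} it then suffices to exhibit such a curve through each point of a dense open subset of $S_f$, which I would arrange by a genericity (Bertini) argument on the choice of the family.

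The main obstacle is the sharp degree estimate, namely upgrading the generic bound $d$ to $d-1$ and showing it holds along a family covering a dense subset of $S_f$. This requires controlling the multiplicity of the base divisor $R$ along the covering curves, or equivalently controlling the order of vanishing of the leading forms $f_i^{(d)}$ along the approach to $B$; the subtlety is that the relevant arcs may be tangent to $H_\infty$ to high order (as already in the model $f(x,y)=(x,xy)$, where two blow-ups are needed and $S_f$ is a line, of degree $d-1=1$), so the bookkeeping must be carried out after a sufficiently fine resolution. It is precisely here that I would use that we work over $\C$: characteristic zero guarantees generic smoothness and separability, so that the relevant restrictions $\widetilde F|_C$ are birational onto their images and the transversality needed to see the full one-unit drop in degree holds generically. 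This is the step that fails in positive characteristic and forces the weaker bound $d$ obtained there.
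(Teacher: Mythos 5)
Your reduction is sound as far as it goes: the identity $S_f=\widetilde F(\sigma^{-1}(B))\cap\C^m$ is correct (it is the same fibre-at-infinity picture as Proposition \ref{graph}), and uniruledness of $S_f$ is anyway known from Theorem \ref{hiper}. The genuine gap is the degree estimate, and the specific mechanism you offer for it does not work. A curve $C\subset Z$ whose $\widetilde F$-image is to lie \emph{inside} $S_f$ (not merely pass through a point of it) must be contained in $\sigma^{-1}(H_\infty)$, indeed in $\sigma^{-1}(B)$, and for such curves your count $(\sigma^*\mathcal O_{\p^n}(d)-R)\cdot C\le d-1$ collapses. If $C$ is contracted by $\sigma$ (the typical case: a curve in a fibre of an exceptional divisor over $B$), then $\sigma^*\mathcal O_{\p^n}(d)\cdot C=0$ and the degree of the image is $-R\cdot C$; here $C\subset\operatorname{Supp}R$, so $R\cdot C$ is typically \emph{negative}, and bounding $-R\cdot C$ by $d-1$ is exactly the multiplicity bookkeeping you defer (``after a sufficiently fine resolution'') and never carry out. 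If instead $\sigma(C)$ is a curve inside $B$, then $\sigma^*\mathcal O_{\p^n}(d)\cdot C=d\deg\sigma(C)\ge d$, and you would need $R\cdot C\ge 1+d(\deg\sigma(C)-1)$, which nothing provides: your justification of $R\cdot C\ge 1$ (vanishing of the leading forms ``along the approach to $B$'') pertains to arcs transverse to $H_\infty$, i.e.\ to curves \emph{not} contained in $\sigma^{-1}(H_\infty)$, whose images are not contained in $S_f$ in the first place. The ``elementary'' version has the same flaw in another guise: the vanishing $f_i^{(d)}(v)=0$ bounds the degree of $f_i(x(s))$ in the arc variable $s$, whereas the parametric curve inside $S_f$ is traced by the family parameter $u$ of a family of arcs $x_u(s)$, via $u\mapsto\lim_s f(x_u(s))$, and about its degree in $u$ you prove nothing. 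Finally, a parametric curve in this paper's sense is a \emph{polynomial} image of $\C$ — a rational curve with a single place at infinity, with the bound on the degree of the parametrization — so even a correct bound $\deg\widetilde F(C)\le d-1$ on the projective degree of the image would not by itself give degree of $\C$-uniruledness at most $d-1$; you never address why your covering curves meet the hyperplane at infinity of $\p^m$ in one place. You effectively concede all of this when you call the sharp estimate ``the main obstacle''.

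For context: the present paper does not prove Theorem \ref{heorem1} — it imports it from \cite{jela2}, where the argument is genuinely topological over $\C$ (sequences $x_k\to\infty$ with $f(x_k)\to y$, normalized limits of degree-$\le d$ parametrizations, the leading coefficient dying in the limit because the asymptotic directions land in the common zero locus of the leading forms), not the generic-smoothness/separability mechanism you invoke. The characteristic-free proof of Theorem \ref{kn} runs precisely your scheme — work on the fibre $\{x_0=0\}$ of the graph closure, cover the nearby fibres $A_c$ by degree-$\le d$ curves, and replace limits by Lemma \ref{tool} (completeness of the parameter space) — and it yields only $d$, because without a limiting process the degree-$d$ leading coefficient of the covering family cannot be killed; that this loss of one is real and unresolved is exactly why the paper states the $d-1$ bound in general characteristic as a conjecture. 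The step your sketch leaves vague is precisely the step separating $d$ from $d-1$.
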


\begin{theorem}[Theorem 3.5 \cite{jela2}]\label{heorem2}
Suppose $X$ is an affine variety with degree of $\C$-uniruledness at most $d_1$. Let $f:X\rightarrow\C^m$ be a generically finite polynomial map of degree $d_2$. Then the set $S_f$ has degree of $\C$-uniruledness at most $d_1d_2$.
\end{theorem}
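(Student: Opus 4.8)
The plan is to reduce the statement to the behaviour at infinity of a single auxiliary map obtained by pulling the uniruling of $X$ back through $f$. By Theorem \ref{hiper} the set $S_f$ is already $\C$-uniruled, so only the \emph{degree} of $\C$-uniruledness is in question; by Proposition \ref{k-uniruledprop}(2) it suffices to produce, through a generic point of each irreducible component of $S_f$, a parametric curve contained in $S_f$ of degree at most $d_1 d_2$.

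First I would invoke Proposition \ref{k-uniruledprop}(3) to fix a dominant polynomial map $\phi:\C\times W\to X$ with $\dim W=\dim X-1$ and $\deg_t\phi\le d_1$, and set $g=f\circ\phi:\C\times W\to\C^m$. A coordinatewise estimate gives $\deg_t g\le d_1 d_2$, and since $\phi$ is dominant we have $\overline{g(\C\times W)}=\overline{f(X)}=:Y$, while the dimension count ($\dim(\C\times W)=\dim X$) shows $g$ is generically finite. The key structural observation is that $S_f\subseteq S_g$: a generic point $y\in S_f$ is a limit $\lim_k f(x_k)$ along a sequence $x_k\to\infty$ in $X$, and because $\phi$ is dominant and polynomial (hence carries bounded sets to bounded sets, so a lift cannot have a bounded subsequence) such a sequence lifts to $z_k\to\infty$ in $\C\times W$ with $g(z_k)=f(x_k)\to y$, whence $y\in S_g$. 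As $S_f$ and $S_g$ are both hypersurfaces in the common variety $Y$ (Theorem \ref{hiper}), every irreducible component of $S_f$ is in fact an irreducible component of $S_g$, so it is enough to control the uniruledness degree of those particular components of $S_g$.

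The heart of the argument is then a degree bound for $S_g$ in the model situation of a map out of the trivially ruled space $\C\times W$, where the lines $\C\times\{w\}$ form a ruling of degree $1$. This is exactly the mechanism behind Theorem \ref{heorem1} (the case $W=\C^{\dim X-1}$), only without the refinement that lowers the bound by one. Compactifying \emph{only} the ruling direction, $\C\times W\subset\p^1\times W$, and resolving the induced rational map $\overline g:\p^1\times W\dashrightarrow\p^m$ over the boundary $\{\infty\}\times W$, I would argue that the rulings $\phi(t,w)$ run off to infinity of $X$ precisely as $t\to\infty$; hence the components of $S_g$ that genuinely come from infinity of $X$ — equivalently, the components of $S_f$ — are exactly those dominated by divisors lying over $\{\infty\}\times W$, and the curves they carry are images of arcs of $t$-degree at most $d_1 d_2$, so they are parametric of degree at most $d_1 d_2$. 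Since these curves lie in the corresponding components of $S_g=S_f$, this furnishes the required covering of $S_f$.

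The step I expect to be the main obstacle is the degree bookkeeping at infinity: one must verify that it is only the ruling ($t$) direction, and not the total degree of $g$ nor the geometry of $W$ at infinity, that governs the degree of the covering curves, and one must cleanly separate the $X$-infinity components of $S_g$ (the components of $S_f$) from the spurious components produced by the non-properness of $\phi$ itself, whose uniruledness is not bounded by $d_1 d_2$. Making the lift $x_k\rightsquigarrow z_k$ rigorous over the locus where $\phi$ is not dominant, and recasting the whole limiting argument in terms of arcs rather than sequences so that the resolution and the $\deg_t$-estimate can be applied uniformly, is the remaining technical point.
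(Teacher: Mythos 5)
Your overall strategy --- pull the uniruling back through $f$, i.e.\ set $g=f\circ\phi:\C\times W\to\C^m$ with $\deg_t g\le d_1d_2$, and reduce to controlling $S_g$ for a map out of $\C\times W$ --- is indeed the skeleton of the proof of this theorem in \cite{jela2} (the present paper only quotes the statement; its remark after Corollary \ref{kkw} records exactly this reduction to maps from $\K\times W$). Your step establishing $S_f\subseteq S_g$ is also essentially right: both sets are hypersurfaces in $Y=\overline{f(X)}$ by Theorem \ref{hiper}, so every component of $S_f$ is a component of $S_g$; the lifting problem you flag (the image of $\phi$ is only dense) is real but secondary, and can be repaired by an algebraic graph-closure argument instead of sequences.

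The genuine gap is your step (c), and it is not merely unfinished bookkeeping: the key claim that the components of $S_f$ are ``exactly those dominated by divisors lying over $\{\infty\}\times W$'' is false, because the non-properness responsible for $S_f$ can live entirely in the $W$-direction of $\C\times W$. Take $X=\C^2$, $W=\C$, the perfectly legitimate degree-$1$ uniruling $\phi(t,w)=(w,t)$, and $f(x,y)=(y,xy)$, so that $g(t,w)=(t,wt)$ and $S_f=S_g=\{0\}\times\C$. Every sequence witnessing a point of $S_f$ has $t_k\to 0$ and $w_k\to\infty$; correspondingly, the induced map $\p^1\times W\ni([T_0:T_1],w)\mapsto[T_0:T_1:wT_1]\in\p^2$ is already a morphism along $\{\infty\}\times W$ and sends that whole divisor into the hyperplane at infinity, so resolving over $\{\infty\}\times W$ produces no curves in $S_f$ at all. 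What actually covers $S_f$ here are limits of the ruling curves after an affine reparametrization in $t$, namely $s\mapsto g(s/w,w)=(s/w,s)\to(0,s)$ as $w\to\infty$: such reparametrizations preserve the bound $\deg_t\le d_1d_2$, and extracting the limit curve is done in \cite{jela2} by normalizing the coefficient vectors of these degree-$\le d_1d_2$ parametric curves and passing to a convergent subsequence --- precisely the step that, in the paper's words, ``intensively uses the topology of $\C$'' and blocks a characteristic-free version. A purely geometric substitute, i.e.\ the case of maps $\K\times W\to\K^m$ in Corollary \ref{kkw}, is stated in this paper as open; your sketch, once forced (as the example shows) to compactify $W$ as well, retains no control of the curve degrees in terms of $\deg_t g$, which is exactly the obstruction you anticipated but did not overcome.
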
 

In \cite{jela2} we provide examples showing that the above bounds are tight. Their proofs intensively use the topology of $\C$, thus they can not be adapted for an arbitrary algebraically closed field. However, by the Lefschetz principle, both statements are true for an arbitrary algebraically closed field $\K$ of characteristic zero.

In this paper we deal with the positive characteristic case. In Theorem \ref{kn} we almost (up to additive constant $1$) generalize Theorem \ref{heorem1}. That is, we prove that the set $S_f$ has degree of $\K$-uniruledness at most $d$. Our proof uses cute and simple geometric idea.

\section{Main result}

\begin{proposition}[\cite{je00}]\label{graph}
Let $f:X\rightarrow Y$ be a generically finite map between affine varieties $X\subset\K^n$ and $Y\subset\K^m$. Let 
$$\graph(f)=\{(x,y)\in X\times Y:y_i=f_i(x)\},$$ 
and let $\overline{\graph(f)}$ be its closure in $\p^n\times\K^m$. Then there is an equality $$S_f=\pi(\overline{\graph(f)}\setminus\graph(f)),$$ 
where $\pi$ denotes the projection to the second factor $\p^n\times\K^m\rightarrow\K^m$.
\end{proposition}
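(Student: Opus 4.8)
The plan is to exploit two features of the compactification $\overline{\graph(f)}\subset\p^n\times\K^m$. First, since $\p^n$ is complete, the projection $\pi\colon\p^n\times\K^m\to\K^m$ is proper, so its restriction to the closed set $\overline{\graph(f)}$ is proper, and in particular closed. Second, because $\graph(f)$ is closed in $\K^n\times\K^m$ (it is cut out by $y_i=f_i(x)$ together with the equations of $X$ and $Y$), one has $\overline{\graph(f)}\cap(\K^n\times\K^m)=\graph(f)$; equivalently $\overline{\graph(f)}\setminus\graph(f)=\overline{\graph(f)}\cap\bigl((\p^n\setminus\K^n)\times\K^m\bigr)$ consists precisely of the points whose first coordinate lies at infinity. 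Writing $B=\pi(\overline{\graph(f)}\setminus\graph(f))$, the set $B$ is closed in $\K^m$, and I would prove $S_f=B$ by a double inclusion, using throughout that a proper morphism between affine varieties is automatically affine, hence finite.

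First I would show $S_f\subseteq B$. If $y\notin B$, put $U=\K^m\setminus B$, an open neighbourhood of $y$. No boundary point lies over $U$, so $(\pi|_{\overline{\graph(f)}})^{-1}(U)=\graph(f)\cap(\K^n\times U)$, which under the isomorphism $X\cong\graph(f)$ is exactly $f^{-1}(U)$, the projection $\pi$ corresponding to $f$ itself. As the restriction of the proper map $\pi|_{\overline{\graph(f)}}$ over the open set $U$, this map $f^{-1}(U)\to U$ is proper. Shrinking $U$ to a distinguished affine open $U'\ni y$ turns $f^{-1}(U')=D(g\circ f)$ into a distinguished affine open of $X$, and a proper morphism between affine varieties is finite; hence $f|_{f^{-1}(U')}$ is finite and $y\notin S_f$.

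For the reverse inclusion $B\subseteq S_f$ I would argue by contraposition: assuming $f$ is finite at $y$, I must show that no boundary point lies over $y$. Choose a distinguished affine neighbourhood $U'\ni y$ with $f^{-1}(U')\to U'$ finite, and set $M=\overline{\graph(f)}\cap(\p^n\times U')=(\pi|_{\overline{\graph(f)}})^{-1}(U')$, which is proper over $U'$. The locally closed subset $G'=\graph(f)\cap(\K^n\times U')\cong f^{-1}(U')$ is dense in $M$, since intersecting a closure with an open set preserves density. On the other hand $G'\to U'$ is finite, hence proper; as $M\to U'$ is separated, the cancellation property of proper morphisms forces the inclusion $G'\hookrightarrow M$ to be proper, and a proper immersion is a closed immersion, so $G'$ is closed in $M$. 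Being closed and dense, $G'=M$, whence $M\subseteq\graph(f)$ and no point of $\overline{\graph(f)}\setminus\graph(f)$ lies over $y\in U'$; thus $y\notin B$.

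The main obstacle is this reverse inclusion, precisely the identity $G'=M$: one must rule out branches of $\overline{\graph(f)}$ escaping to infinity over a whole neighbourhood of $y$, and this is exactly where the finiteness, hence properness, of $f$ over $U'$ is indispensable, fed into the cancellation property. The forward inclusion is comparatively soft, resting only on the completeness of $\p^n$ and on the fact that a proper affine morphism is finite.
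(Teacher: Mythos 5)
The paper never proves this proposition: it is imported verbatim from \cite{je00}, so there is no internal argument to measure yours against. Judged on its own, your proof is correct and complete. The forward inclusion is sound: properness of $\pi|_{\overline{\graph(f)}}$ (completeness of $\p^n$), base change over $U=\K^m\setminus B$, and the fact that a proper morphism of affine varieties is finite (proper $+$ affine $\Rightarrow$ finite), with the shrinking to distinguished opens being exactly what guarantees both source and target are affine. The reverse inclusion is the genuinely delicate half, and your chain of reasoning holds up: $G'$ is open in $M$ (it is $M\cap(\K^n\times\K^m)$) and dense in $M$ (intersecting a closure with an open set preserves density); $G'\to U'$ is finite hence proper; $M\to U'$ is separated; cancellation makes the immersion $G'\hookrightarrow M$ proper, hence a closed immersion; closed plus dense gives $G'=M$, so no boundary point lies over $U'$. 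The only imprecision is a harmless conflation of neighbourhoods in $Y$ with neighbourhoods in $\K^m$; since $\graph(f)\subset\K^n\times Y$ forces $\overline{\graph(f)}\subset\p^n\times Y$, everything can be read over $Y$ and the argument is unaffected. Your route --- compactify the graph and translate ``finite at $y$'' into ``no points at infinity over a neighbourhood of $y$'' --- is precisely the mechanism behind Jelonek's characterization of $S_f$ in \cite{jel} and \cite{je00}; the scheme-theoretic cancellation argument is a clean modern packaging of that idea rather than a different approach.
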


\begin{lemma}\label{tool}
Let $A\subset\K^n$ be an affine set, and let $f$ be a regular function on $\K^n$ not equal to
$0$ on any component of $A$. Suppose that for each $c\in \K\setminus\{0\}$ the set 
$$A_c:=A\cap\{x\in\K^n:f(x)=c\}$$
has degree of $\K$-uniruledness at most $d$. Then the set $A_0$ also has degree of $\K$-uniruledness at most $d$.
\end{lemma}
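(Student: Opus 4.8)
The plan is to realize the covering curves of all the nonzero fibres as members of a single \emph{complete} (proper) family of bounded-degree curves, and then to specialize this family to the fibre $A_0$, using properness to guarantee that the limiting curves genuinely cover $A_0$ while still having degree at most $d$.

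First I would set up the parameter space. Since every curve in sight has degree at most $d$, I can work inside a proper parameter space $M$ of degree-$\leq d$ maps $\p^1\to\p^n$ (equivalently, a space of degree-$\leq d$ rational curves in $\p^n$, e.g.\ a suitable Chow variety or a space of stable maps), together with its universal curve $\mathcal{U}\to M$ and evaluation $e\colon\mathcal{U}\to\p^n$, which is proper over $\p^n$. Inside $M$ consider the closed locus $\mathcal{M}$ of those maps whose image lies in $\overline{A}\subset\p^n$ and is contained in a single fibre of $f$; on $\mathcal{M}$ the common value $c=f|_{\mathrm{curve}}$ is a regular function (valued in $\p^1$, allowing $\infty$ where the curve meets the hyperplane at infinity). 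By hypothesis, for each $c\in\K\setminus\{0\}$ the fibre $A_c$ has degree of $\K$-uniruledness at most $d$, so by Proposition \ref{k-uniruledprop} the evaluation $e$ restricted to the part of $\mathcal{M}$ lying over $\K\setminus\{0\}$ is dominant onto each component of $A\setminus A_0$. Because $f$ vanishes on no component of $A$, the set $A\setminus A_0$ is dense in $A$, so this part of the evaluation already dominates every component of $A$.

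The heart of the argument is the specialization $c\to 0$. Taking closures inside the proper space $M$ and its (resolved) universal curve, I obtain a family of curves lying over the whole of $\K$, including $c=0$, for which the evaluation onto $\overline{A}$ is \emph{proper}; hence its image is closed. Since that image already contains the dense set $A\setminus A_0$, it is all of $\overline{A}$, and in particular it contains every point of $A_0$. This is exactly the point where completeness is indispensable: a merely constructible image could miss the lower-dimensional set $A_0$, whereas a closed image cannot. Thus through every point $x\in A_0$ passes a limit curve $C$ of degree at most $d$, and two facts pin down $C$: it lies in $\overline{A}$, a closed condition preserved under specialization, and on its affine part $f$ is identically $0$, because $f$ restricted to the universal curve equals the parameter $c$, which tends to $0$; therefore the affine part of $C$ lies in $A_0$.

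It remains to read off the conclusion and to confront the one genuine difficulty: guaranteeing that these limit curves are honest \emph{parametric} curves of degree at most $d$ lying in $A_0$, rather than points or curves swallowed by the hyperplane at infinity. Here I would fix a \emph{general} point $x$ of a chosen irreducible component $B$ of $A_0$; for such $x$ the component of the limit map through $x$ is one-dimensional, is not contained in the hyperplane at infinity, and has mapping degree at most $d$, so restricting it to the affine line yields a nonconstant polynomial parametrization of degree at most $d$ of an irreducible curve contained in $B$. Applying Proposition \ref{k-uniruledprop} to each component $B$ then gives that $A_0$ has degree of $\K$-uniruledness at most $d$. I expect the main obstacle to be precisely this last step — controlling the behaviour of the specialized maps at the hyperplane at infinity and ruling out degenerations that collapse the curve to a point or raise the parametrization degree — which is handled by working with bounded-degree maps of $\p^1$ and by choosing $x$ general.
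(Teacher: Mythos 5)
Your overall strategy is the same as the paper's: put the covering curves of all fibres $A_c$, $c\neq 0$, into a family over a \emph{complete} parameter space, use properness to conclude that the image of the evaluation map is closed, and then deduce from density of $A\setminus A_0$ that every point of $A_0$ is hit by a limit curve on which $f\equiv 0$. The gap is in your final step, and it is not a routine technicality but exactly the crux of the lemma. In a Chow variety or a space of stable maps, the limit object is a cycle (or a map from a tree of $\p^1$'s) of total degree at most $d$; the component through your point $x\in A_0$ is then an irreducible rational curve of degree at most $d$ not contained in the hyperplane at infinity. But such a curve need not be a \emph{parametric} curve: a parametric curve is a rational curve with exactly \emph{one place} at infinity, and this property is not implied by rationality and bounded degree, nor is it obviously preserved by your limits. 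For instance $\{xy=1\}\subset\K^2$, or a nodal cubic whose node lies on the hyperplane at infinity with one branch tangent to it, are rational curves of degree at most $3$ whose affine parts are isomorphic to $\K\setminus\{0\}$; they contain no parametric curve at all. In your limit the domain can break into a tree in which the component through $x$ abuts components mapped into the hyperplane at infinity, and then that component acquires two or more points mapping to infinity; nothing in your argument excludes this. Choosing $x$ general in a component $B$ of $A_0$ does not repair it either: the assertion you need concerns the structure of the limiting maps, and you give no argument that the locus of points covered only by such degenerate limits is a proper closed subset of $B$. Making this step honest would require tracking the marked point $\infty$ through the specialization and analyzing which components of the limit can meet the hyperplane at infinity --- real work that the proposal waves away in one sentence.

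The paper sidesteps the whole issue by compactifying the space of \emph{parametrizations} rather than the space of curves. It considers pairs $(a,b)\in\K^n\times\p^{dn-1}$, where $a$ is the point the curve passes through and $b$ is the projectivized coefficient vector of a polynomial map $\varphi_{a,b}:\K\to\K^n$ of degree at most $d$ with $\varphi_{a,b}(0)=a$. The conditions ``$\varphi_{a,b}(\K)\subset A$'' and ``$f\circ\varphi_{a,b}$ is constant'' are polynomial identities in $t$, hence closed conditions on $(a,b)$, and completeness of $\p^{dn-1}$ makes the image of the projection $(a,b)\mapsto a$ closed. A limit taken in coefficient space is automatically again a polynomial map of degree at most $d$, and it is nonconstant because $b$ lives in projective space; so the degenerations that break your argument simply cannot occur, and a curve through $x\in A_0$ on which $f$ is constant lies in $A_0$ for free. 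If you want to salvage your version, the cleanest repair is to replace the Chow/stable-map space by this coefficient space --- at which point you will have reconstructed the paper's proof.
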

\begin{proof}
Suppose affine set is given by $A=\{x\in\K^n:g_1(x)=\dots=g_r(x)=0\}$. For $a=(a_1,\dots,a_n)\in\K^n$ and $b=(b_{1,1}:\dots:b_{d,n})\in\p^{dn-1}$, let 
$$\varphi_{a,b}:\K\ni t\rightarrow (a_1+b_{1,1}t+\dots+b_{1,d}^dt^d,\dots,a_n+b_{n,1}t+\dots+b_{n,d}^dt^d)\in\K^n$$ 
be a parametric curve of degree at most $d$. Let us consider a variety and a projection
$$\K^n\times\p^{dn-1}\supset V=\{(a,b)\in\K^n\times\p^{dn-1}:\forall_{t,i}\;g_i(\varphi_{a,b}(t))=0\;\text{and}$$
$$\forall_{t_1,t_2}\;f(\varphi_{a,b}(t_1))=f(\varphi_{a,b}(t_2))\}\ni(a,b)\rightarrow a\in\K^n.$$
The definition of the set $V$ says that parametric curves $\varphi_{a,b}$ are contained in $A$ and $f$ is constant on them. Hence $V$ is closed and the image of the projection is contained in $A$. Moreover the image contains every $A_c$ for $c\in \K\setminus\{0\}$, since they are filled with
parametric curves of degree at most $d$. But since $\p^{dn-1}$ is complete and $V$ is closed, the image of the projection is closed. Hence it must be the whole set $A$. In particular $A_0$ is contained in the image, so it is filled with parametric curves of degree at most $d$.
\end{proof}

\begin{theorem}\label{kn}
Let $\K$ be an arbitrary algebraically closed field. If $f:\K^n\rightarrow\K^m$ is a generically finite polynomial map of degree $d$,  then the set $S_f$ has degree of $\K$-uniruledness at most $d$.
\end{theorem}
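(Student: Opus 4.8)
The plan is to realize $S_f$ as a degenerate member of a family of slices all of which are visibly $\K$-uniruled of degree at most $d$, and then to invoke Lemma \ref{tool} to transfer this bound to the limit. \emph{First}, by Proposition \ref{graph} the set $S_f$ is the image under the second projection of the part at infinity of $\overline{\graph(f)}\subset\p^n\times\K^m$. I would reinterpret this in affine terms by passing to the affine cone over the $\p^n$-factor: introduce a homogenizing coordinate $x_0$, let $F_i(x_0,x)=x_0^d f_i(x/x_0)$ be the degree-$d$ homogenization of $f_i$, and set $A:=\overline{\{(x_0,x,y)\in\K^{n+1}\times\K^m: x_0\neq 0,\ y=x_0^{-d}F(x_0,x)\}}$. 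Since $A$ is the closure of the image of the irreducible variety $\K^*\times\K^n$ under a morphism, it is irreducible, and the coordinate function $x_0$ does not vanish identically on it. Identifying $\p^n\times\K^m$ with the projectivization of this cone, one checks that $A\cap\{x_0=0\}$ is exactly the cone over $\overline{\graph(f)}\setminus\graph(f)$, so that $\pi_y(A\cap\{x_0=0\})=S_f$, where $\pi_y$ is the projection onto the $\K^m$-factor.

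\emph{Next}, I would apply Lemma \ref{tool} to the set $A$ and the regular function $x_0$. For $c\neq 0$ the slice $A_c=A\cap\{x_0=c\}$ equals $\{(c,x,f(x/c)):x\in\K^n\}$, which is isomorphic to $\K^n$ via the projection onto the $x$-coordinate. The key elementary observation is that such a graph is $\K$-uniruled of degree at most $d$: the image of an affine line $t\mapsto a+bt$ under $x\mapsto(c,x,f(x/c))$ is the parametric curve $t\mapsto(c,a+bt,f((a+bt)/c))$, whose coordinates have degree at most $d$ because $f$ has degree $d$ and is precomposed with an affine map; varying the line we cover all of $A_c$. Thus every slice $A_c$ with $c\neq 0$ has degree of $\K$-uniruledness at most $d$.

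\emph{Then}, Lemma \ref{tool} yields that the degenerate slice $A_0=A\cap\{x_0=0\}$ also has degree of $\K$-uniruledness at most $d$. It remains to push this bound forward along $\pi_y$. Using characterization (3) of Proposition \ref{k-uniruledprop}, write a dominant $\phi:\K\times W\to A_0$ with $\deg_t\phi\le d$ and $\dim W=\dim A_0-1$; composing with the linear projection $\pi_y$ gives a map onto a dense subset of $\overline{S_f}$ of $t$-degree at most $d$, and a generic linear section of $W$ cuts its source down to the correct dimension $\dim\overline{S_f}-1$ while preserving dominance. Since $S_f$ is closed by Theorem \ref{hiper}, this shows $S_f$ has degree of $\K$-uniruledness at most $d$. \emph{The main obstacle} I anticipate is conceptual rather than computational: correctly building the cone model so that its $x_0=0$ slice projects precisely onto $S_f$ in accordance with Proposition \ref{graph}, and verifying the hypotheses of Lemma \ref{tool}, namely the irreducibility of $A$ and the non-vanishing of $x_0$. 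The final projection step requires only routine dimension bookkeeping once the uniform degree-$d$ bound on the generic slices — the \emph{cute and simple} geometric heart of the argument — is in place.
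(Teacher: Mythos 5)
Your cone model breaks down at exactly the point where the statement has content: the passage from the boundary slice back to $S_f$. First, the set-theoretic claim that $A\cap\{x_0=0\}$ is the cone over $\overline{\graph(f)}\setminus\graph(f)$, so that $\pi_y(A\cap\{x_0=0\})=S_f$, is false. For any fixed $w\in\K^n$ the curve $t\mapsto (t,tw,f(w))$ lies in your set $\{x_0\neq 0,\ y=f(x/x_0)\}$ for $t\neq 0$, and its limit at $t=0$ is the point $(0,0,f(w))$. Hence $A\cap\{x_0=0\}$ contains the whole vertex component $\{0\}\times\overline{f(\K^n)}$, and $\pi_y(A\cap\{x_0=0\})=\overline{f(\K^n)}$, which (already for $m=n$) is all of $\K^n$, not the hypersurface $S_f$. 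Second, and more fatally, even if you discard the vertex component and keep only the components of $A_0$ lying over $\overline{\graph(f)}\cap\{x_0=0\}$ (whose $\pi_y$-image really is $S_f$), those components are themselves cones: they are unions of the $\K^*$-orbit closures $t\mapsto(t x_0^*,t x^*,y^*)$, which are \emph{lines}. So these components are trivially $\K$-uniruled of degree $1$, and the conclusion you extract from Lemma \ref{tool} carries no information. The ruling lines are precisely the fibers of $\pi_y$, so your last step fails: if the dominant map $\phi:\K\times W\to A_0$ furnished by Proposition \ref{k-uniruledprop}(3) happens to be this ruling, then $\pi_y\circ\phi$ factors through $W$, and restricting to a generic section $W'\subset W$ of dimension $\dim S_f-1=n-2$ gives an image of dimension at most $n-2$, which cannot dominate the $(n-1)$-dimensional $S_f$. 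The claim that ``a generic linear section of $W$ ... preserves dominance'' is exactly what is not true when the covering curves are collapsed by the projection, and nothing in your construction rules this out.

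The paper avoids both problems by never forming the cone. It fixes a point $z\in\overline{\graph(f)}\cap\{x_0=0\}$, picks $i$ with $z_i\neq 0$, and works inside the affine chart $A=\overline{\graph(f)}\cap\{x_i\neq 0\}$ of $\p^n\times\K^m$, applying Lemma \ref{tool} to the regular function $x_0/x_i$. There the slices $A_c$, $c\neq 0$, are graphs of $f$ over the affine hyperplanes $\{x_i=1/c\}\subset\K^n$, covered by images of lines (degree at most $d$), and the limit slice $A_0$ sits inside $\overline{\graph(f)}\cap\{x_0=0\}$ itself — no extra vertex components, no extra fiber direction. Since $f$, hence $\pi$ restricted to $\overline{\graph(f)}$, is generically finite, curves in the boundary can collapse under $\pi$ only over a codimension-one subset, which is harmless by condition (2) of Proposition \ref{k-uniruledprop}; this is the generic finiteness your $\pi_y$ irreparably lacks. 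If you want to salvage your write-up, replace the affine cone by these per-point affine charts; the slicing-plus-Lemma-\ref{tool} skeleton of your argument is then exactly the paper's proof.
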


\begin{proof}
If $n=1$, then the map is proper and $S_f$ is empty. Suppose $n\geq 2$. Due to Proposition \ref{graph} $$S_f=\pi(\overline{\graph(f)}\setminus\graph(f)).$$

It is enough to prove that the set $\overline{\graph(f)}\setminus\graph(f)$ is filled with parametric curves of degree at most $d$. Indeed, we can take images of these curves under projection $\pi$. Projection has degree $1$, so images of these curves are either parametric curves of degree at most $d$, or points. Since map $f$, and as a consequence also $\pi$, is generically finite, only on a codimension $1$ subvariety images of curves will become points. This by Proposition \ref{k-uniruledprop} is acceptable.

Let us denote coordinates in $\p^n\times\K^m$ by $(x_{0} : \dots : x_n;x_{n+1},\dots,x_{n+m})$. Take an arbitrary point $$z\in\overline{\graph(f)}\setminus\graph(f)=\overline{\graph(f)}\cap\{x_0=0\}.$$ 
We are going to show that through $z$ passes a parametric curve of degree at most $d$. Since $z_0=0$, there exists $1\leq i\leq n$ such that $z_i\neq 0$. Consider an affine set $A:=\overline{\graph(f)}\cap\{x_i\neq 0\}$ and a function $f=\frac{x_0}{x_i}$ regular on it.  Consider sets
$$A_c:=A\cap\{x\in\K^n:f(x)=c\}=\overline{\graph(f)}\cap\{x_i\neq 0\}\cap\{x_0=cx_i\}.$$ 
For $c\neq 0$ sets $A_c$ are filled with parametric curves of degree at most $d$. Indeed, we can take an index $j$ distinct from $0,i$ and consider curves
$$\K\ni t\rightarrow (c:a_1:\dots:a_{i-1}:1:\dots:a_{j-1}:t:a_{j+1}:\dots)\xrightarrow{f} A_c\subset\graph(f).$$
Hence, by Lemma \ref{tool}, the set 
$$A_0=\overline{\graph(f)}\cap\{x_i\neq 0\}\cap\{x_0=0\}$$ 
is also filled with such curves. In particular we get that though $z$ passes a parametric curve of degree at most $d$ which is contained in $$\overline{\graph(f)}\setminus\graph(f)=\overline{\graph(f)}\cap\{x_0=0\}.$$ 
This finishes the proof.
\end{proof}

By looking carefully at the above proof we get the following slightly more general result.

\begin{corollary}\label{kkw}
Let $\K$ be an arbitrary algebraically closed field, and let $W$ be an affine variety. If $f:\K^2\times W\rightarrow\K^m$ is a generically finite polynomial map of degree $d$, then the set $S_f$ has degree of $\K$-uniruledness at most $d$.
\end{corollary}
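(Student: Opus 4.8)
The plan is to adapt the proof of Theorem \ref{kn} to the slightly more general source space $\K^2\times W$. The key observation is that the entire mechanism of the proof of Theorem \ref{kn} rests on two features of the domain: first, that we may form the graph and take its closure in $\p^n\times\K^m$ so that the non-properness locus is captured by the hyperplane at infinity $\{x_0=0\}$ via Proposition \ref{graph}; and second, that through a generic point of a level set $A_c$ we can explicitly write down a parametric curve of degree at most $d$ lying in $\graph(f)$. So I would first fix a compactification. Writing points of $\K^2\times W$ with $\K^2$-coordinates $(x_1,x_2)$, I would embed $\K^2$ into $\p^2$ with homogeneous coordinates $(x_0:x_1:x_2)$ and leave $W$ affine, forming $\overline{\graph(f)}$ inside $\p^2\times W\times\K^m$; by Proposition \ref{graph} the set $S_f$ is the projection of $\overline{\graph(f)}\setminus\graph(f)=\overline{\graph(f)}\cap\{x_0=0\}$ onto the $\K^m$ factor.

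Next I would run the same level-set argument. Take a point $z$ in $\overline{\graph(f)}\cap\{x_0=0\}$. Since $x_0=0$ at $z$, at least one of $x_1,x_2$ is nonzero there, say $x_i\neq 0$ with $i\in\{1,2\}$; set $A:=\overline{\graph(f)}\cap\{x_i\neq 0\}$ and consider the regular function $x_0/x_i$ on $A$, with level sets $A_c=A\cap\{x_0=cx_i\}$. The crucial point is that for $c\neq 0$ the sets $A_c$ are filled with parametric curves of degree at most $d$: with two available $\K^2$-coordinates $x_1,x_2$, after fixing $x_i$ we still have one remaining free coordinate (the other of $x_1,x_2$) in which to run the parameter $t$, exactly as in the proof of Theorem \ref{kn} where the index $j$ distinct from $0$ and $i$ was used. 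This is precisely why $\K^2$, rather than $\K^1$, is the minimal requirement: we need one coordinate to pin down the level value $c$ and a second to sweep out the curve. Applying Lemma \ref{tool} to the affine set $A$ and the function $f=x_0/x_i$ then shows that $A_0=\overline{\graph(f)}\cap\{x_i\neq0\}\cap\{x_0=0\}$ is also filled with parametric curves of degree at most $d$, so such a curve passes through $z$.

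Finally, projecting to $\K^m$ preserves the bound exactly as before: the projection has degree $1$, so the images of the covering curves are parametric curves of degree at most $d$ or points, and since $f$ is generically finite the images degenerate to points only on a subvariety of codimension at least $1$, which Proposition \ref{k-uniruledprop} permits. Hence $S_f$ has degree of $\K$-uniruledness at most $d$.

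I expect the main obstacle to be purely bookkeeping rather than conceptual: one must verify that the factor $W$ is harmless throughout. Specifically, in forming the level-set curves one leaves the $W$-coordinates and the image coordinates $x_{n+1},\dots$ fixed and varies only the single free $\K^2$-coordinate, and one must check that the resulting curve genuinely lies in $\graph(f)$ (equivalently that $f$ restricted along such a line is a polynomial of degree at most $d$ in $t$, which holds since $f$ has degree $d$). One should also confirm that Lemma \ref{tool}, stated for affine subsets of $\K^n$, applies verbatim after choosing an affine chart of $\p^2$ and affine coordinates on $W$, so that $A$ sits inside an affine space of the appropriate dimension and $x_0/x_i$ becomes a genuine regular function there. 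None of these steps is difficult, which is exactly why the corollary follows "by looking carefully at the above proof."
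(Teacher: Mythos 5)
Your level--set mechanism and the appeal to Lemma \ref{tool} are the right ingredients, but your very first step --- invoking Proposition \ref{graph} for the closure of the graph in $\p^2\times W\times\K^m$ --- is a genuine gap, and the argument breaks exactly there. Proposition \ref{graph} is stated for the closure in $\p^n\times\K^m$ with the \emph{whole} source embedded in $\K^n$; it compactifies every direction in which points of the source can escape. If you compactify only the $\K^2$ factor, you capture only escapes to infinity in the $\K^2$ directions, while non-properness of $f$ can equally well be caused by points whose $W$-coordinates run off to infinity with bounded $\K^2$-coordinates. Concretely, take $W=\K$ and
$$f:\K^2\times\K\ni(x_1,x_2,w)\mapsto(x_1,x_2,x_1w)\in\K^3,$$
a generically finite map of degree $2$. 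Here $S_f=\{y_1=0\}$ is nonempty (over $y_1=0$, $y_3\neq 0$ the fiber is empty but nearby fibers escape to infinity in the $w$-direction; over $y_1=y_3=0$ the fiber is infinite). Yet the graph is already \emph{closed} in $\p^2\times\K\times\K^3$: the bihomogeneous equations $y_1x_0=x_1$, $y_2x_0=x_2$, $y_3x_0=x_1w$ hold on the closure, and setting $x_0=0$ forces $x_1=x_2=0$, which is not a point of $\p^2$. So with your compactification $\overline{\graph(f)}\setminus\graph(f)=\emptyset$, and your argument covers none of $S_f$. For a partial compactification only the inclusion $\pi(\overline{\graph(f)}\setminus\graph(f))\subset S_f$ survives; the equality you need is false.

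The repair --- and this is what the paper means by ``looking carefully at the proof of Theorem \ref{kn}'' --- is to keep the full compactification. Embed $W$ as a closed subvariety of some $\K^N$, so that $\K^2\times W\subset\K^{2+N}$, and take the closure of the graph in $\p^{2+N}\times\K^m$; Proposition \ref{graph} then applies verbatim. At a point $z\in\overline{\graph(f)}\cap\{x_0=0\}$ the nonvanishing coordinate $x_i$ may now be \emph{any} of the $2+N$ coordinates, in particular a $W$-coordinate. The role of $\K^2$ is that one can always pick the sweep index $j\in\{1,2\}\setminus\{i\}$: affine lines in the direction $e_j$ stay inside $\K^2\times W$ (lines in $W$-directions would in general leave the source), the function $x_0/x_i$ is constant along them, and their images in the graph are parametric curves of degree at most $d$, so Lemma \ref{tool} and the degree-$1$ projection argument finish exactly as in Theorem \ref{kn}. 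Note this also corrects your heuristic: the ``pinning'' coordinate need not come from $\K^2$; what $\K^2$ buys is a free sweep direction no matter which coordinate does the pinning.
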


If we were able to prove the assertion of the corollary for maps $f:\K\times W\rightarrow\K^m$, this would imply Theorem \ref{heorem2} for arbitrary algebraically closed fields (see the proof of Theorem 3.5 \cite{jela2}).

\section{Remarks}

The gap between characteristic zero (Theorem \ref{heorem1}) and arbitrary characteristic (Theorem \ref{kn}) suggests the following.

\begin{conjecture}
	Let $\K$ be an arbitrary algebraically closed field. If $f:\K^n\rightarrow\K^m$ is a generically finite polynomial map of degree $d$,  then the set $S_f$ has degree of $\K$-uniruledness at most $d-1$.
\end{conjecture}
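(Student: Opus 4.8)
The plan is to refine the geometric argument of Theorem \ref{kn} so as to shave off the additive constant $1$. As in that proof, by Proposition \ref{graph} it suffices to show that the boundary at infinity $B:=\overline{\graph(f)}\cap\{x_0=0\}$ has degree of $\K$-uniruledness at most $d-1$, and then push the covering curves forward by the degree-one projection $\pi$ (which may collapse some curves to points on a codimension-one set, allowed by Proposition \ref{k-uniruledprop}). The first step is to isolate why Theorem \ref{kn} produces the bound $d$ and not $d-1$: there the covering curves of $B$ are limits of image curves $t\mapsto f(\ell(t))$ of affine lines $\ell$, whose top coefficient is the leading form $f^{(d)}$ evaluated at the direction of $\ell$. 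The crucial observation is that every point $(0:v;y)\in B$ has its direction $v$ lying in the common zero locus of the leading forms $f_k^{(\deg f_k)}$ --- this is exactly the condition cutting out $\{x_0=0\}$ on $\overline{\graph(f)}$ --- so that $f^{(d)}(v)=0$ in every coordinate. Consequently any line in an escape direction $v$ already has image of degree at most $d-1$, and the degree $d$ in Theorem \ref{kn} is an artifact of allowing arbitrary line directions.

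Building on this, I would replace the parameter variety $V\subset\K^n\times\p^{dn-1}$ of Lemma \ref{tool} by a variety $V'\subset\K^n\times\p^{(d-1)n-1}$ parametrizing degree-$\le(d-1)$ curves contained in $\overline{\graph(f)}$, now constructed from escape-direction lines. Concretely, for a center $a$ and an escape direction $v$ one takes the reparametrized curve $s\mapsto(a+sv,\,f(a+sv))$, passes to the chart $s=1/t$ and clears denominators; since $f^{(d)}(v)=0$ the resulting parametrization has degree at most $d-1$ in $t$, and its closure meets $\{x_0=0\}$ in a point of $B$. Letting $(a,v)$ range over a $(d-1)$-degree family and invoking completeness of $\p^{(d-1)n-1}$ exactly as in Lemma \ref{tool}, the projection $V'\to\K^n$ (and then to $B$) has closed image; the whole content is then to prove that this image is all of $B$.

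The step I expect to be the main obstacle is precisely this coverage, and it splits into two linked difficulties. First, for a fixed escape direction the image $f(a+sv)$ stays bounded as $s\to\infty$ only when $f$ is constant along the line, so a typical point of $B$ is reached not by a single escape-direction line but as a limit of such lines as both the center and the direction vary; one must show that these limits sweep out all of $B$. This is exactly the place where the characteristic-zero proof of Theorem \ref{heorem1} uses the topology of $\C$, and replacing it demands a purely algebraic dominance argument for the family $V'\to B$. Second, and more seriously, in characteristic $p$ the vanishing of the $t^d$-coefficient does not by itself guarantee that the \emph{minimal} degree of a parametrization of the limit curve drops below $d$: an inseparable (Frobenius) reparametrization preserves the image while forcing the parametrization degree into a multiple of $p$, so one must verify that the map $V'\to B$ is not merely dominant but \emph{separable}. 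I expect controlling this inseparability to be the essential difficulty, and the likeliest reason that the present method yields only the bound $d$.

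Finally, a word on why the saving cannot be purely local. A slice-by-slice refinement of Lemma \ref{tool} --- asking each $A_c$ to have degree of $\K$-uniruledness $d-1$ --- provably fails: if some coordinate of $f$ contains the monomial $u_j^{\,d}$, the corresponding slice-graph is a rational curve whose normalization has degree exactly $d$ (this persists even when $p\mid d$, since every parametrization factors through the normalization), so its degree of $\K$-uniruledness is $d$, not $d-1$. Hence the one-degree economy must be global, extracted from the geometry of $B$ as a whole; one promising alternative is to combine Theorem \ref{hiper} and Theorem \ref{hiperc}, which give that $S_f$ is a $\K$-uniruled hypersurface of controlled degree, with a normalization argument realizing $S_f$ as the image of a ruled variety whose ruling has degree $d-1$. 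Either route ultimately confronts the same inseparability obstruction, which is what makes the conjecture genuinely harder than Theorem \ref{kn}.
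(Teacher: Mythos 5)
This statement is stated in the paper as a \emph{conjecture}: the paper proves only the weaker bound $d$ (Theorem \ref{kn}) and explicitly leaves the $d-1$ bound open, so there is no proof of it to compare against, and the only question is whether your proposal closes the gap. It does not, and you say so yourself. Your preliminary observation is correct and can be justified over any algebraically closed field by a curve-germ/valuation argument: if $(0:v;y)\in\overline{\graph(f)}\cap\{x_0=0\}$, then every leading form $f_k^{(\deg f_k)}$ vanishes at $v$, whence $\deg_s f_k(a+sv)\leq d-1$ for lines in escape directions. But these curves $s\mapsto(a+sv,f(a+sv))$ lie in $\graph(f)$, not in the boundary $B$; worse, as you note, when $f$ is nonconstant along such a line its image escapes in the $\K^m$ factor as well, so the closure in $\p^n\times\K^m$ acquires \emph{no} point over $s=\infty$, and generic escape-direction lines contribute nothing to $B$ at all. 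So the candidate family does not even touch $B$ except in degenerate cases, and the ``coverage'' step you flag as the main obstacle is not a technical refinement but the entire content of the conjecture. Crucially, the mechanism that rescued Theorem \ref{kn} is unavailable: there the slices $A_c$, $c\neq 0$, are filled by images of lines in an affine chart at infinity with $x_0/x_i=c$ fixed, and nothing forces the relevant top coefficient to vanish on those lines --- which is precisely why that method yields $d$ --- while your own observation about a monomial $u_j^d$ shows the slicewise bound $d-1$ is genuinely false, so one cannot simply re-run Lemma \ref{tool} with parameter space $\p^{(d-1)n-1}$.

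The two obstacles you name --- dominance of the limit family $V'\to B$ and separability in characteristic $p$ --- are real and are left entirely unresolved, so what you have written is an accurate research plan and diagnosis, not a proof. Note that the completeness trick of Lemma \ref{tool} does automatically produce \emph{nonconstant} parametrizations of degree at most $d-1$ at every point of the closed image (the weighted coefficients $b_{i,j}^j$ make scaling of $b$ act by reparametrization $t\mapsto\lambda t$, so $b\neq 0$ in projective space forces a nonconstant curve); the missing ingredient is solely an argument that the image of your $V'$ is all of $B$, and no such argument is proposed --- in characteristic zero this is exactly where the proof of Theorem \ref{heorem1} in \cite{jela2} invokes the topology of $\C$, and in characteristic $p$ your inseparability concern (Frobenius reparametrizations preserving the image while inflating parametrization degree) is a plausible explanation of why the method stalls at $d$. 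Your closing suggestion of combining Theorems \ref{hiper} and \ref{hiperc} with a normalization argument is likewise only a sketch: those results bound the degree of the hypersurface $S_f$ and give its $\K$-uniruledness, but by themselves give no bound on the degree of $\K$-uniruledness beyond Theorem \ref{kn}. The conjecture remains open after your proposal.
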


When we consider all generically finite maps $f:\K^n\rightarrow\K^n$ of degree at most $d$, then by Theorem \ref{hiper} hypersurfaces $S_f$ are all $\K$-uniruled, and by Theorem \ref{hiperc} degrees of this hypersurfaces are bounded. In Theorem \ref{kn} we show that their degree of $\K$-uniruledness is also bounded (by $d$). It is reasonable to ask the following general question.

\begin{question}
Does for every $n$ and $d$ exist a universal constant $D=D(n,d)$, such that evey $\K$-uniruled hypersurface in $\K^n$ of degree at most $d$ has degree of $\K$-uniruledness at most $D(n,d)$?
\end{question}

We ask even a stronger question.

\begin{question}\label{22}
Does for every $n$ and $d$ exist a universal constant $D=D(n,d)$, such that if a hypersurface in $\K^n$ of degree at most $d$ contains a parametric curve passing through $O=(0,\dots,0)$, then it also contains such a curve of degree at most $D(n,d)$?
\end{question}

One can show that positive answer to Question \ref{22} is equivalent (for uncountable field $\K$) to the fact that in the set of all hypersurfaces in $\K^n$ of a bounded degree, the set of $\K$-uniruled hypersurfaces forms a closed subset.


\end{document}